\newtheorem{theorem}{Theorem}[section]
\newtheorem{lemma}[theorem]{Lemma}
\newcommand{\LL}{\textnormal{\textsf{L}}}
\DeclareRobustCommand{\JJ}{\reflectbox{\LL}}
\newcommand{\outerl}{\textsc{Grounded}-\LL}
\newcommand{\outerlj}{\textsc{Grounded}-\ensuremath{\{\LL,\JJ\}}}
\newcommand{\cC}{\mathcal{C}}
\newcommand{\cls}[1]{\textsc{#1}}
\begin{document}

\title{On grounded L-graphs and their relatives}

\author{Vít Jelínek\thanks{Supported by the project 16-01602Y of the Czech Science 
Foundation and by project Neuron Impuls of the Neuron Fund for 
Support of Science.}\\
\small Computer Science Institute, Faculty of Mathematics and Physics,\\[-0.8ex]
\small Charles University, Prague, Czechia\\[-0.8ex]
\small\tt jelinek@iuuk.mff.cuni.cz\\
\and
Martin Töpfer\thanks{This project has
received funding from the
European Union’s Horizon 2020 research and innovation programme under the
Marie Skłodowska-Curie
Grant Agreement No. 665385.
}\\
\small Institute of Science and Technology, Klosterneuburg, Austria\\[-0.8ex]
\small\tt mtopfer@gmail.com
}

\maketitle

\begin{abstract}
We consider the graph classes \outerl\ and \outerlj\ corresponding to graphs that admit an 
intersection representation by \LL-shaped curves (or \LL-shaped and \JJ-shaped curves, 
respectively), where additionally the topmost points of each curve are assumed to belong to a 
common horizontal line. We prove that \outerl\ graphs admit an equivalent characterisation in terms 
of vertex ordering with forbidden patterns. 

We also compare these classes to related intersection classes, such as the grounded segment graphs, 
the monotone \LL-graphs (a.k.a. max point-tolerance graphs), or the outer-1-string graphs. We give 
constructions showing that these classes are all distinct and satisfy only trivial or previously 
known inclusions.

\end{abstract}
\section{Introduction}

An \emph{intersection representation} of a graph $G=(V,E)$ is a map that assigns to every vertex 
$x\in V$ a set $s_x$ in such a way that two vertices $x$ and $y$ are adjacent if and only if the two 
corresponding sets $s_x$ and $s_y$ intersect. The graph $G$ is then the \emph{intersection graph} of 
the set system $\{s_x;\; x\in V\}$. Many natural graph classes can be defined as intersection graphs 
of sets of a special type. 

One of the most general classes of this type is the class of \emph{string graphs}, denoted 
\cls{String}. A string graph is an intersection graph of \emph{strings}, which are bounded 
continuous curves in the plane. All the graph classes we consider in this paper are subclasses 
of string graphs. 

A natural way of restricting a string representation is to impose geometric restrictions on the 
strings we consider. This leads, for instance, to \emph{segment graphs}, which are intersection 
graphs of straight line segments, or to \emph{\LL-graphs}, which are intersection graphs of 
\LL-shapes, where an \LL-shape is a union of a vertical segment and a horizontal segment, in which 
the bottom endpoint of the vertical segment coincides with the left endpoint of the horizontal one.
Apart from \LL-shapes, we shall also consider \JJ-shapes, which are obtained by reflecting an 
\LL-shape along a vertical axis.

Apart from restricting the geometry of the strings, one may also restrict a string representation by 
imposing conditions on the placement of their endpoints. Following the terminology of Cardinal et 
al.~\cite{Cardinal}, we will say that a representation is \emph{grounded} if all the strings have 
one endpoint on a common line (called \emph{grounding line}) and the remaining points of the strings 
are confined to a single open halfplane with respect to the grounding line. We will usually assume 
that the grounding line is the $x$-axis, and the strings extend below the line. The endpoint 
belonging to the grounding line is the \emph{anchor} of the string.

Similarly, a string representation is an \emph{outer} representation, if all the strings are 
confined to a disk, and each string has one endpoint on the boundary of the disk. The endpoint on 
the boundary is again called the \emph{anchor} of the string. One may easily see that a graph admits 
a grounded string representation if and only if it admits an outer-string representation. Such 
graphs are known as \emph{outer-string} graphs, and we denote their class \cls{Outer-string}.

\begin{figure}[ht]
\centerline{\includegraphics[width=0.9\textwidth]{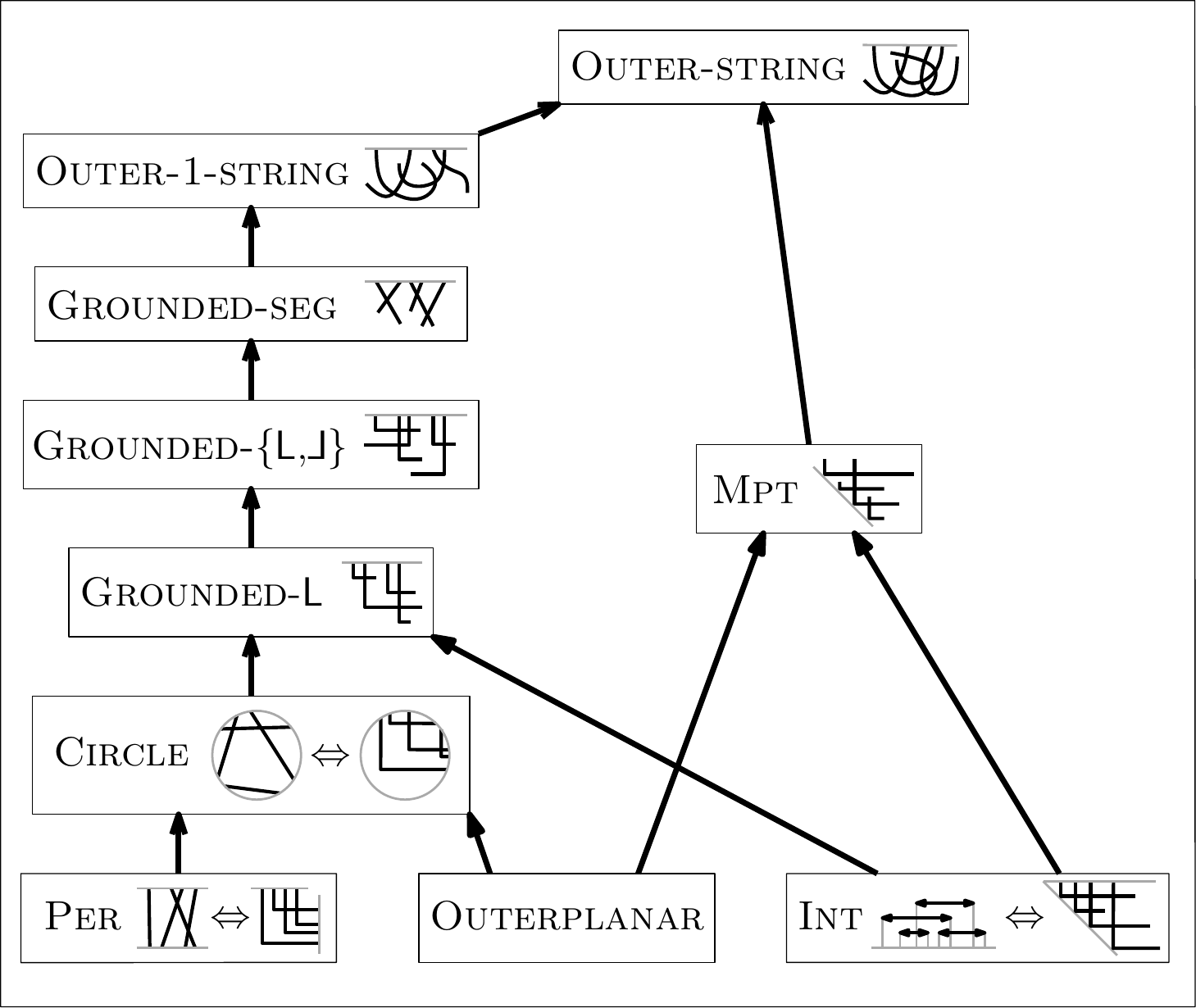}}
 \caption{Graph classes considered in this paper. Arrows indicate inclusions. We will see in 
Section~\ref{sec-sep} that there are no other inclusions among these classes apart from those 
implied by the depicted arrows. In particular, the classes are all distinct.}\label{fig-classes}
\end{figure}

Our first main result, Theorem~\ref{thm-patterns} in Section~\ref{sec-ord}, is a characterisation 
of the class of grounded \LL-graphs by vertex orderings avoiding a pair of forbidden patterns. Our 
next main result, presented in Section~\ref{sec-sep}, is a collection of constructions providing 
separations between the classes in Figure~\ref{fig-classes}, showing that there are no nontrivial 
previously unknown inclusions among them. 

Let us now formally introduce the graph classes we are interested in, and briefly review some 
relevant previously known results.

\emph{1-string graphs} are the graphs that admit a string representation in which any two distinct 
strings intersect at most once. The class of 1-string graphs is denoted \cls{1-String}.

\emph{Outer-1-string graphs} (denoted \cls{Outer-1-string}) are the graphs that have a string 
intersection 
representation which is simultaneously a 1-string representation and an outer-string 
representation. Note that not every graph from \cls{1-string}$\,\cap\,$\cls{Outer-string} is 
necessarily in \cls{Outer-1-string}, as we shall see in Section~\ref{sec-sep}.

\emph{\LL-graphs} (\LL) are the intersection graphs of \LL-shapes. This type of representation has 
received  significant amount of interest lately. A notable recent result is a theorem of Gon\c 
calves, Isenmann and Pennarun~\cite{Goncalves} showing that every planar graph is an \LL-graph. 
Since it is known that \LL-graphs are a subclass of segment graphs~\cite{Middendorf}, this result 
strengthens an earlier result of Chalopin and Gon\c calves~\cite{CG} showing that all planar graphs 
are segment graphs.

\emph{Max point-tolerance graphs} (\cls{Mpt}), also known as \emph{monotone \LL-graphs}, are the 
graphs with an \LL-representation in which all the bends of the \LL-shapes belong to a common 
downward-sloping line. This class was independently introduced by Soto and Thraves Caro~\cite{Soto}, 
by Catanzaro et al.~\cite{Catanzaro} and by Ahmed et al.~\cite{Ahmed}. Apart from the above 
intersection representation by \LL-shapes, it admits several other equivalent characterisations. 
Notably, \cls{Mpt} graphs can be characterised as graphs that admit a vertex ordering that avoids a 
certain forbidden pattern~\cite{Ahmed,Catanzaro,Soto}. This graph class is known to be a superclass 
of several important graph classes, such as outerplanar graphs and interval graphs, among 
others~\cite{Ahmed,Catanzaro,Soto}.

\emph{Grounded segment graphs} (\cls{Grounded-seg}) are the intersection graphs admitting a 
grounded segment representation. Cardinal et al.~\cite{Cardinal} proved that these are also precisely 
the intersection graphs of downward rays in the plane. Note that any grounded segment graph also 
admits an outer-segment representation, but the converse does not hold, as shown by Cardinal et 
al.~\cite{Cardinal}. Cardinal et al. also showed that outer-segment graphs are a proper subclass of 
outer-1-string graphs. This strengthens an earlier result of Cabello and 
Jejčič~\cite{CabelloJejcic}, who showed that outer-segment graphs are a proper subclass of 
outer-string graphs.

\emph{Grounded \LL-graphs} (\outerl) are the intersection graphs of groun\-ded \LL-shapes, that is, 
\LL-shapes with top endpoint on the $x$-axis. This class of graphs has been first considered by 
McGuiness~\cite{McGuiness}, who represented them as intersections of upward-infinite \LL-shapes.
These graphs can also equivalently be represented as intersections of \LL-shapes inside a disk, 
with the top endpoint of each \LL-shape anchored to the boundary of the disk. McGuiness has shown 
that this class is $\chi$-bounded, i.e., these graphs have chromatic number bounded from above by a 
function of their clique number. The $\chi$-boundedness result has been later generalized to all 
outer-string graphs by Rok and Walczak~\cite{Rok}.

\emph{Grounded $\{\LL,\JJ\}$-graphs} (\outerlj) are analogous to grounded \LL-graphs, but their 
representation may use both \LL-shapes and \JJ-shapes. An argument of Middendorf and 
Pfeiffer~\cite{Middendorf} shows that \outerlj\ is a subclass of \cls{Grounded-Seg}.

\emph{Circle graphs} (\cls{Circle}) are the intersection graphs of chords inside a circle, or 
equivalently, the intersection graphs of \LL-shapes drawn inside a circle, so that both endpoints 
of each \LL-shape touch the circle. Circle graphs include all outerplanar graphs~\cite{Wessel}.

\emph{Interval graphs} (\cls{Int}) are the intersection graphs of intervals on the real line. 
Equivalently, we may easily observe that these are exactly the graphs with an intersection 
representation which is simultaneously an \cls{Mpt}-repre\-sen\-tation and a 
\outerl-representation. But 
note that not every graph from the intersection of \cls{Mpt} and \outerl\ is an interval graph, as 
witnessed, e.g., by 
any cycle $C_n$ of length $n\ge 4$.

\emph{Permutation graphs} (\cls{Per}) are the intersection graphs of segments between a pair of 
parallel lines, with each segment having one endpoint on each of the two lines. Equivalently, we may 
observe that these are exactly the graphs admitting an \LL-representation in which the top endpoints 
of all the \LL-shapes are on a common horizontal line and the right endpoints are on a common 
vertical line.

We will always assume implicitly that the intersection representations we deal with satisfy certain 
non-degeneracy assumptions. In particular, we will assume that the strings have no 
self-intersections, that any two strings intersect in at most finitely many points (except for 
interval representations), and that any intersection of two strings is a proper crossing. In 
particular, an endpoint of a string does not belong to another string. Moreover, we will assume that 
every segment in a segment representation is non-degenerate, i.e., it has distinct endpoints. This 
also applies to horizontal and vertical segments forming an \LL-shape or \JJ-shape. These 
assumptions imply, in particular, that in any $\{\LL,\JJ\}$-representation, each intersection is 
realized as a crossing of a horizontal segment with a vertical one.

Note that in any grounded representation with a horizontal grounding line, the left-to-right 
ordering of the anchors on the grounding line defines a linear order on the vertex set of the 
represented graph. We say that this linear order is \emph{induced} by the representation. Similarly, 
for an \cls{Mpt} representation, we can define the induced order by following the top-left to 
bottom-right order of the bends along their common supporting line. Induced vertex orders  
play an important part both in characterising graphs in a given class and in separating 
different classes.

\section{Vertex orders with forbidden patterns}\label{sec-ord}

\newcommand{\Pa}{P_1}
\newcommand{\Pb}{P_2}

Our main result is a characterisation of grounded \LL-graphs as graphs that admit vertex orderings 
avoiding a pair of four-vertex patterns. Let us begin by formalising the key notions.

An \emph{ordered graph} is a pair $(G,<)$, where $G=(V,E)$ is a graph and $<$ is a linear order 
on~$V$. A \emph{pattern of order $k$} is a triple $P=(W,C,F)$ where $W$ is the set 
$\{1,2,\dotsc,k\}$
while $C$ and $F$ are two disjoint subsets of $\binom{W}{2}$. The set $W$ is the \emph{vertex set} 
of the pattern~$P$, $C$ is the set of \emph{compulsory edges} of~$P$, and $F$ is the set of 
\emph{forbidden edges}. 

For an ordered graph $(G,<)$  with $G=(V,E)$, we say that $(G,<)$ 
\emph{contains} a pattern $P=(W,C,F)$ of order $k$ if $G$ contains $k$ distinct vertices 
$x_1<x_2<\dotsb<x_k$ such that for every $\{i,j\}\in C$ the vertices $x_i$ and $x_j$ are adjacent 
in $G$, while for every $\{i,j\}\in F$, $x_i$ and $x_j$ are non-adjacent in~$G$. If $(G,<)$ does 
not contain $P$, we say that it \emph{avoids}~$P$. For simplicity, we will often write an edge 
$\{i,j\}$ as~$ij$.

\begin{figure}
\centerline{\includegraphics[width=\textwidth]{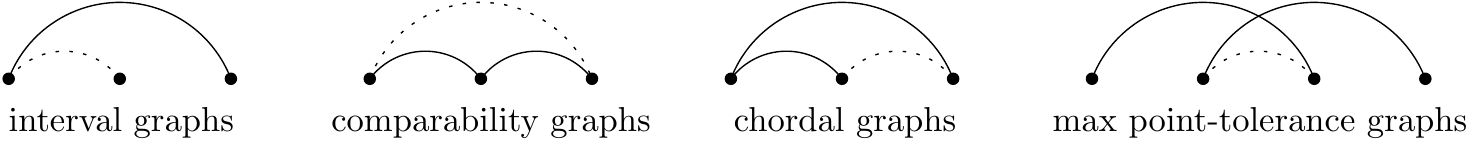}}
 \caption{Forbidden order patterns for various graph classes~\cite{Brandstadt,Catanzaro,Damaschke}. 
The solid arcs denote compulsory edges and the dotted arcs are forbidden edges.}\label{fig-pats}
\end{figure}

Many important graph classes can be characterised in terms of vertex orderings with forbidden 
patterns, that is, for a class $\cC$ there is a pattern $P_\cC$ such that a graph $G=(V,E)$ is in 
$\cC$ if and only if it admits a linear order $<$ such that $(G,<)$ avoids $P_\cC$; see 
Figure~\ref{fig-pats} for examples of classes with their forbidden patterns. The forbidden pattern
characterisation of \cls{Mpt} was found independently by at least three groups of 
authors~\cite{Ahmed,Catanzaro,Soto}. 

As our first main result, we show that \outerl\ is characterised by a pair of forbidden patterns.

\begin{figure}
\centerline{\includegraphics[width=0.8\textwidth]{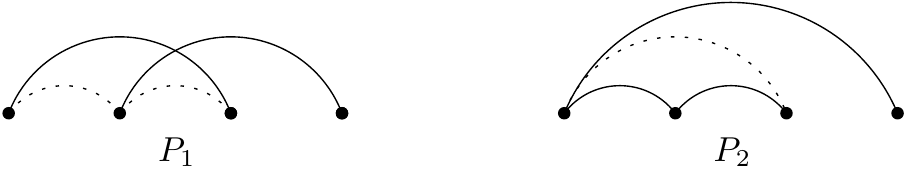}}
 \caption{The two forbidden ordering patterns for the class~\outerl.}\label{fig-grlpats}
\end{figure}

\begin{theorem}\label{thm-patterns}
Consider the patterns $\Pa=(\{1,2,3,4\},\{13,24\},\{12,23\})$ and 
$\Pb=(\{1,2,3,4\},\{12,14,23\},\{13\})$; see Figure~\ref{fig-grlpats}. A graph $G=(V,E)$ is a 
grounded \LL-graph if and only if it has a vertex ordering that avoids both $\Pa$ and~$\Pb$. In 
fact, a linear order $<$ on $V$ avoids the two patterns $\Pa$ and $\Pb$ if and only if $G$ has a 
grounded \LL-representation which induces the linear order~$<$.
\end{theorem}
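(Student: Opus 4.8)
The plan is to base both directions on a single geometric observation describing when two grounded \LL-shapes cross. Fix a grounded \LL-representation and label the shapes so that their anchors increase from left to right; write $a_i$ for the anchor of the $i$-th shape, $r_i$ for the right endpoint of its horizontal arm, and $d_i$ for its depth (the length of its vertical arm). Since every shape lies below the $x$-axis and opens to the right, for $i<j$ the only possible crossing is between the horizontal arm of the $i$-th shape and the vertical arm of the $j$-th. Hence the first step is to record the crossing rule: for $i<j$, the two shapes intersect if and only if $a_j<r_i$ (the horizontal arm of $i$ reaches past the anchor of $j$) and $d_i<d_j$ (the horizontal arm of $i$ lies above the corner of $j$). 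Everything rests on this equivalence, and it also shows that the induced order is exactly the left-to-right anchor order.

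For necessity I would feed the two patterns into this rule. For $\Pa$, the edges $13,24$ force $d_{x_1}<d_{x_3}$ and $a_{x_4}<r_{x_2}$, while the reach conditions of the non-edges $12,23$ hold automatically because $a_{x_2}<a_{x_3}<r_{x_1}$ and $a_{x_3}<a_{x_4}<r_{x_2}$; so these non-edges must fail on depth, giving $d_{x_1}>d_{x_2}>d_{x_3}$, contradicting $d_{x_1}<d_{x_3}$. The argument for $\Pb$ is analogous: the path $x_1\sim x_2\sim x_3$ yields $d_{x_1}<d_{x_3}$, while $x_1\sim x_4$ makes $r_{x_1}$ reach past $a_{x_3}$, so the non-edge $13$ forces $d_{x_1}>d_{x_3}$, again a contradiction. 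Thus the induced order of any grounded \LL-representation avoids both patterns.

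The substance is the converse: from an order avoiding $\Pa$ and $\Pb$ I must build a representation inducing it. I would place the anchors at $a_i=i$ and choose each reach $r_i$ to cover exactly the block $\{i+1,\dots,M_i\}$, where $M_i$ is the largest right-neighbour of the $i$-th vertex (and $r_i$ covers nothing when there is none). By the crossing rule, the representation is then correct precisely when the depths satisfy $d_i<d_j$ for every edge $ij$ with $i<j$, and $d_j<d_i$ for every non-edge $ij$ with $i<j<M_i$. So I would define a relation $\prec$ generated by exactly these required inequalities---call the first kind \emph{$A$-steps} (forward, along an edge) and the second \emph{$B$-steps} (backward, along a non-edge lying inside the reach of the smaller vertex, so that vertex still has a neighbour further right)---and take any linear extension of $\prec$ as the depth order. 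A short case check, splitting on whether $j$ exceeds $M_i$, then confirms that the resulting representation realises $G$ and induces the given order.

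The crux, and the step I expect to be the main obstacle, is showing that $\prec$ is acyclic; this is where the forbidden patterns do their work. I would take a shortest cycle and examine its largest-index vertex $v$: its incoming arc must be an $A$-step $y\prec v$ with $y<v$ and $y\sim v$, and its outgoing arc must be a $B$-step $v\prec x$ with $x<v$, $x\not\sim v$, and a certificate neighbour $k>v$ of $x$. The four distinct vertices $x,y,v,k$ then split by the position of $x$ relative to $y$. If $y<x$ and $y\not\sim x$, the tuple $(y,x,v,k)$ is a copy of $\Pa$; if $x<y$ and $x\sim y$, the tuple $(x,y,v,k)$ is a copy of $\Pb$. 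In each remaining sub-case the step $y\prec x$ is itself legal---an $A$-step when $y<x$ and $y\sim x$, and a $B$-step with the same certificate $k$ when $x<y$ and $x\not\sim y$---so I can replace $y\prec v\prec x$ by $y\prec x$ and obtain a strictly shorter cycle, contradicting minimality; the degenerate three-cycle collapses to a $2$-cycle, which is excluded outright since one pair cannot be both an edge and a non-edge. Checking that this case split is exhaustive and that every shortcut is genuinely a valid step is the delicate part; once acyclicity is established, the linear extension together with the routine verification of the crossing pattern finishes the proof.
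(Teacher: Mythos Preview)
Your argument is correct. The necessity direction is essentially identical to the paper's: both reduce to the crossing rule ``$\ell_i$ meets $\ell_j$ iff $a_j<r_i$ and $d_i<d_j$'' and then chase inequalities to a contradiction for each pattern.

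For sufficiency the two proofs share the same skeleton---anchors at $i$, reach of $\ell_i$ out to the rightmost neighbour $M_i$, and a case split on the relative position of two auxiliary vertices that produces $\Pa$ or $\Pb$---but the framings differ. The paper assigns depths \emph{greedily} left to right (each $v_i$ just longer than the longest $v_p$ among already-placed left-neighbours), and then verifies correctness by taking a minimal false-positive pair $(j,i)$; the greedy rule supplies a vertex $m$ with $x_mx_i\in E$ and $d_m>d_j$, and the split $m<j$ versus $j<m$ yields $\Pa$ or $\Pb$ directly. Because the depths come from this specific construction, the paper can dispose of your two ``shortcut'' sub-cases for free: when $j<m$ the shapes $\ell_j,\ell_m$ intersect geometrically, so minimality forces $x_j\sim x_m$ and one lands in $\Pb$; when $m<j$ they are disjoint, so $x_m\not\sim x_j$ and one lands in $\Pa$. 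Your abstraction into the relation $\prec$ and a shortest-cycle argument trades this for two extra (easy) cases but gains a cleaner separation between the combinatorics (acyclicity of $\prec$) and the geometry, and shows that \emph{any} linear extension of $\prec$ works as a depth order rather than just the greedy one.
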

\begin{proof}
Suppose first that $G$ has a grounded \LL-representation. Let $\ell_1,\ell_2,\dotsc,\ell_n$ be the 
\LL-shapes used in the representation, ordered left to right according to the positions of their 
anchors. Let $h_i$ and $v_i$ denote, respectively, the horizontal and vertical segment of~$\ell_i$. 
Let $x_i$ be the vertex represented by $\ell_i$. We will show that the vertex ordering 
$x_1<x_2<\dotsb<x_n$ avoids the two patterns $\Pa$ and~$\Pb$. 

Assume that $(G,<)$ contains $\Pa$, and let $x_p<x_q<x_r<x_s$ be the four vertices forming a copy 
of~$\Pa$. Since $x_q x_s$ is an edge, the two \LL-shapes $\ell_q$ and $\ell_s$ intersect. 
Let $R$ be the rectangle whose vertices are the anchors of $\ell_q$ and $\ell_s$, the bend of 
$\ell_q$ and the intersection of $\ell_q$ and~$\ell_s$. Since neither $x_p$ nor $x_r$ is adjacent 
to $x_q$, we see that $\ell_p$ is completely outside of $R$, while $v_r$ is inside~$R$. It follows 
that $\ell_p$ and $\ell_r$ are disjoint, and fail to represent the compulsory edge $13$ of~$\Pa$.

Suppose now that $(G,<)$ contains $\Pb$, and let $x_p<x_q<x_r<x_s$ now be the four vertices 
forming a copy~$\Pb$. Since $x_p x_s$ is an edge, the segment $h_p$ intersects~$v_s$. Moreover, 
$v_q$ intersects $h_p$, while $v_r$ does not intersect $h_p$, and in particular, $\ell_q$ and 
$\ell_r$ fail to represent the compulsory edge $23$ of~$\Pb$. We conclude that any grounded
\LL-representation of $G$ induces a vertex order that avoids $\Pa$ and~$\Pb$.

To prove the converse, assume that $G$ is a graph with a vertex ordering $x_1<x_2<\dotsb<x_n$ which 
avoids both $\Pa$ and~$\Pb$. We will construct a grounded \LL-representation 
$\ell_1,\ell_2,\dotsc,\ell_n$ of $G$ inducing the order $<$, with $\ell_i$ being the \LL-shape 
representing the vertex~$x_i$.

We fix the anchor of $\ell_i$ to be the point $(i,0)$ on the horizontal axis. Next, we process the 
vertices left to right, and for a vertex $x_i$ we define the representing shape $\ell_i$, 
assuming $\ell_1,\ell_2,\dotsc,\ell_{i-1}$ have already been defined, and assuming further that 
for any $j<i$ such that $x_j x_i$ is an edge of $G$, the horizontal segment $h_j$ of $\ell_j$ 
reaches 
to the right of the point~$(i,0)$. 

To define $\ell_i$, we first describe its vertical segment~$v_i$. Let $N^-_i$ be the set of vertices 
$x_j$ such that $j<i$ and $x_j x_i\in E$. If $N^-_i$ is empty, choose the vertical segment $v_i$ to 
be shorter than any of $v_1,\dotsc,v_{i-1}$. In particular, $v_i$ will not intersect any of the 
\LL-shapes constructed in previous steps. If $N^-_i$ is nonempty, let $x_p$ be a vertex from 
$N^-_i$ chosen so that $v_p$ is as long as possible (and therefore $h_p$ is as low as possible). 
Then define $v_i$ to be slightly longer than $v_p$, so that $v_i$ intersects $h_p$ (recall that 
$h_p$ reaches to the right of $(i,0)$) but does not intersect any \LL-shape whose horizontal segment 
is 
below~$h_p$. This choice of $v_i$ guarantees that $v_i$ intersects $h_j$ for any $x_j\in N^-_i$.

It remains to define the segment $h_i$. Let $j$ be the largest index such that $j>i$ 
and $x_i x_j\in E$. If no such $j$ exists, set $j=i$. The horizontal segment $h_i$ then has length 
$j-i+\frac12$, and in particular, its right endpoint has horizontal coordinate $j+\frac12$.

Having defined the \LL-shapes $\ell_1,\dotsc,\ell_n$ as above, let us verify that their 
intersection graph is~$G$. If $x_j x_i$ is an edge of $G$ with $j<i$, then the definition of $v_i$ 
guarantees that $v_i$ intersects $h_j$, and therefore the two \LL-shapes $\ell_j$ and $\ell_i$ 
intersect. 

To prove the converse, suppose for contradiction that for some $j<i$ the two  \LL-shapes $\ell_j$ 
and $\ell_i$ intersect while $x_j x_i$ is not an edge of~$G$. Choose such a pair $i,j$ so that $i$ 
is the smallest possible. There must be an index $k>i$ such that $x_j x_k$ is an edge of $G$, 
otherwise $h_j$ would be too short to intersect~$v_i$. Similarly, there must be an index $m<i$ such 
that $x_m x_i$ is an edge of $G$, and $v_m$ is longer than $v_j$, otherwise $v_i$ would not be long 
enough to intersect~$h_j$. 

We now distinguish two cases depending on the relative position of $m$ and~$j$. If $m<j$, then 
$\ell_m$ and $\ell_j$ are disjoint (recall that $v_m$ is longer than $v_j$) and hence $x_m x_j$ is 
not an edge of~$G$. It follows that the four vertices $x_m<x_j<x_i<x_k$ form the pattern $\Pa$, a 
contradiction. Suppose now that $j<m$. It follows that $\ell_j$ intersects $\ell_m$, and therefore 
$x_j x_m$ is an edge of $G$, by the minimality of~$i$. Thus, the four vertices $x_j<x_m<x_i<x_k$ 
form the pattern~$\Pb$, which is again a contradiction.
\end{proof}

\section{Separations between classes}\label{sec-sep}
Consider again the classes in Figure~\ref{fig-classes}. The inclusions indicated by arrows are 
either easy to observe or follow from known results that we have pointed out in the introduction. 
Our goal now is to argue that there are no other inclusions among these classes except those 
that follow by transitivity from the depicted arrows. In particular, the classes are all
distinct. 

\begin{figure}
\centerline{\includegraphics[width=0.9\textwidth]{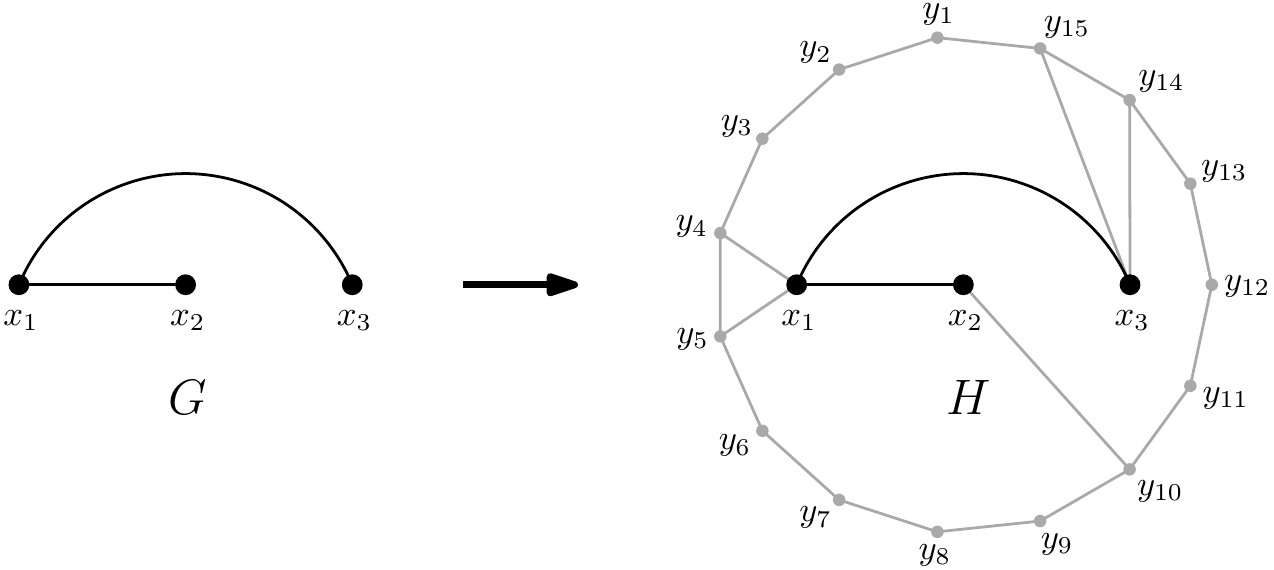}}
\caption{An ordered graph $G$, and an example of its cycle extension $H$.}\label{fig-cyclex}
\end{figure}

As our main tool, we will use a lemma which is a slight modification of the `Cycle Lemma' of 
Cardinal et al.~\cite{Cardinal}. The lemma allows to prescribe the cyclic order of a subset of 
vertices in an outer-1-string representation of a graph. Let $G=(V_G,E_G)$ be a graph on $n$ 
vertices $x_1, x_2,\dotsc,x_n$, and let $<$ be the linear order $x_1<x_2<\dotsb<x_n$. The
\emph{cyclic shift} of $<$ is the linear order $<_s$ defined as 
$x_n<_s x_1<_s x_2<_s\dotsb<_s x_{n-1}$. The \emph{reversal} of $<$, denoted $<_r$, is defined as
with $x_n <_r x_{n-1}<_r \dotsb<_r x_1$. We say that two linear orders of $V$ are \emph{equivalent} 
if one can be obtained from the other by a sequence of cyclic shifts and reversals.

A \emph{cycle extension} of the ordered graph $(G,<)$ is an (unordered) graph $H=(V_H,E_H)$ with 
these properties (see Figure~\ref{fig-cyclex}):
\begin{itemize}
\item $V_H$ is the disjoint union of the sets $V_G=\{x_1,\dotsc,x_n\}$ and 
$V_C=\{y_1,\dotsc,y_{5n}\}$. The vertices $V_G$ induce the graph $G$ (in particular, 
$E_G\subseteq E_H$), and $V_C$ induce a cycle of length $5n$ with edges $y_1 y_2,\allowbreak y_2 
y_3,\dotsc,\allowbreak 
y_{5n-1}y_{5n},\allowbreak y_{5n} y_1$.
\item For each vertex $x_i\in V_G$, either $x_i$ is adjacent to $y_{5i}$ and has no other neighbors 
in $V_C$, or $x_i$ is adjacent to $y_{5i-1}$ and $y_{5i}$ and has no other neighbors in~$V_C$.
\end{itemize}

For the classes of graphs we consider, an intersection representation of a graph $G$ inducing an 
order $<$ can always be extended into a representation of a cycle extension of $G$, without 
modifying the curves representing~$G$. This is formalised by the next lemma.

\begin{lemma}\label{lem-extend}
Given a graph class $\cC\in\{\text{\outerl},\text{\outerlj},\allowbreak\text{\cls{Mpt}},\allowbreak
\text{\cls{Grounded-seg}}, \allowbreak \text{\cls{Outer-1-string}}\}$, for every 
$\cC$-representation of a graph $G$ inducing an order $<$ on $V_G$ there is a cycle extension $H$ 
of~$(G,<)$ such that a $\cC$-representation of $H$ can be constructed by adding into the given 
$\cC$-representation of $G$ the curves representing the vertices of $V_H\setminus V_G$. 
\end{lemma}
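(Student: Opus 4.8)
The plan is to treat the five classes through a common geometric mechanism and then adapt it to the specific curve shapes of each class. In every case the given representation has a distinguished \emph{anchor line}: the grounding line for the three grounded classes, the boundary circle for \cls{Outer-1-string} (via the grounded$\leftrightarrow$outer equivalence), and the common supporting line of the bends for \cls{Mpt}. The induced order $<$ records the order in which the curves of $G$ meet this line. The key observation is that there is a thin region flanking the anchor line into which no curve of $G$ reaches except immediately next to its own anchor (or bend); consequently a short curve placed in this region can be made to cross one chosen curve of $G$ exactly once, near its anchor, while avoiding all the others. Since everything new is confined to this thin region, the curves of $G$ are never modified.

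First I would fix, for each original curve, a small entry window on the anchor line next to its anchor, disjoint from the windows of the others and ordered along the line according to~$<$. Into the gaps between consecutive windows I would insert the $5n$ new anchors, reserving a block $y_{5i-4},\dotsc,y_{5i}$ for the neighbourhood of~$x_i$; the generous factor $5$ leaves enough room to route the cycle past $x_i$, to realise the prescribed crossing with $x_i$ (using $y_{5i}$, or $y_{5i-1}$ and $y_{5i}$ when the local geometry makes a single crossing inconvenient), and to perform the turns needed to close the cycle. The new curves are built as a chain in which consecutive members overlap slightly so as to cross exactly once, yielding the path $y_1 y_2\cdots y_{5n}$; this chain hugs the anchor line inside the thin region, so it is disjoint from all curves of $G$ except where we deliberately cross some~$x_i$.

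For each class I would realise the chain with curves of the correct type: shallow \LL-shapes hugging the grounding line for \outerl\ (each an almost-horizontal hook just below the axis), shallow \LL- and \JJ-shapes for \outerlj, short segments emanating from the grounding line for \cls{Grounded-seg} (this case is essentially the construction of Cardinal et al.~\cite{Cardinal}), tiny monotone \LL-shapes with bends on the supporting line for \cls{Mpt}, and, for \cls{Outer-1-string}, arbitrary simple arcs in the thin annulus just inside the boundary. In each case I would verify that consecutive new curves cross once, that non-consecutive new curves are disjoint, and that the only new crossings with $G$ are the prescribed ones, so that the intersection graph of the enlarged family is exactly a cycle extension of~$(G,<)$.

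The main obstacle is closing the chain $y_1 y_2\cdots y_{5n}$ into a cycle within the rigid shape classes. For \LL-shapes the only possible crossing of two shapes forces the right-hand anchor to hang deeper, so a left-to-right chain of strictly increasing depth cannot be closed directly; analogous directional constraints arise for segments and for monotone \LL-shapes. I expect to resolve this exactly as one does for small cycles: the returning part of the cycle is carried by a few shallow shapes that are \emph{overpassed} by the deeper ones rather than crossed, so that the final edge $y_{5n}y_1$ is realised without creating any of the crossings a monotone arrangement would force. For \outerlj\ the availability of \JJ-shapes makes the return trivial, and for \cls{Outer-1-string} closing is immediate in the annulus. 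The remaining bookkeeping is to keep every pair of curves crossing at most once, which is automatic for \LL-, \JJ- and segment shapes and is easily respected by the arcs in the \cls{Outer-1-string} case.
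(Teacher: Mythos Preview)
Your approach is the same as the paper's: insert the cycle curves in a thin strip along the anchor line, then deal with each class's geometry case by case. The paper's own proof is a single sentence deferring to a figure, so your sketch is already far more explicit than what the paper provides, and your identification of the cycle-closing issue as the only nontrivial point is exactly right.

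There is one slip in your resolution of that issue, however. You write that the returning part of the cycle is ``carried by a few shallow shapes that are overpassed by the deeper ones.'' For grounded \LL-shapes this is backwards. Two grounded \LL-shapes $\ell_i,\ell_j$ with $\ell_i$ anchored to the left intersect precisely when $h_i$ reaches $v_j$ \emph{and} $v_j$ is at least as deep as $v_i$. Hence a \emph{shallow} shape whose horizontal arm spans the whole picture will cross every deeper vertical it passes --- including all the $v_{x_i}$ --- which is exactly what you must avoid. The long closing \LL-shape must instead be \emph{deep}: deeper than every $x_i$ and every other $y_j$, so that its horizontal arm passes harmlessly beneath all their bends and meets only the single $y$ at the far end, whose vertical segment is made deeper still. (So the closing shapes briefly leave your ``thin region''; the paper's phrase ``close enough to the grounding line'' should be read as describing the generic $y_j$, not the one or two that realise the wrap-around edge.) With that correction the construction you outline goes through for \outerl, and the analogous fix handles \cls{Grounded-seg} and \cls{Mpt}.
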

\begin{proof}
Suppose we are given a $\cC$-representation of $G$. It is easy to see that we can add the curves 
representing the cycle $V_C$ close enough to the grounding line; see Figure~\ref{fig-extend}. Note 
that for \cls{Mpt}-representations, each original \LL-shape may have to be intersected by two 
consecutive \LL-shapes from the added cycle. In all the other types of representations, each vertex 
$x_i$ of $G$ will have a unique neighbor $y_{5i}$ among the $V_C$.
\end{proof}

\begin{figure}
\centerline{\includegraphics[width=0.9\textwidth]{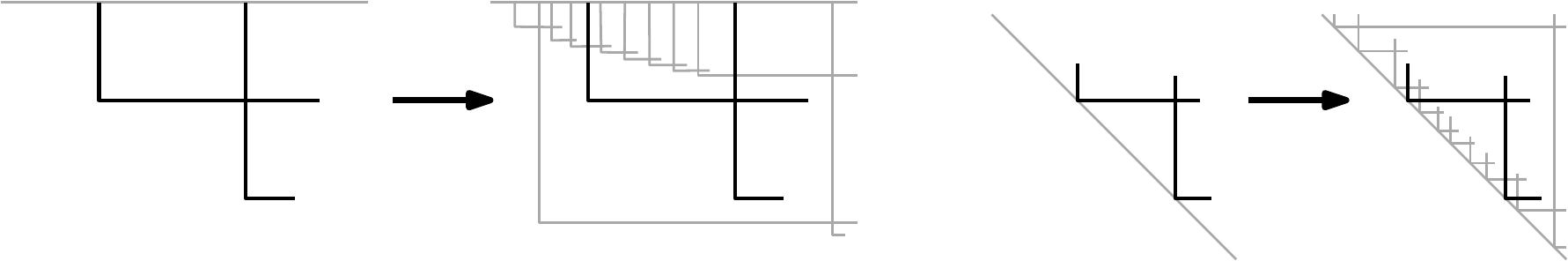}}
\caption{Extending the representation of $G$ into a representation of a cycle 
extension for grounded \LL-representations (left) and \cls{Mpt} representations 
(right).}\label{fig-extend}
\end{figure}

Recall that two linear orders are equivalent if one can be obtained from the other by a sequence of 
cyclic shifts and reversals. The key property of cycle extensions of $(G,<)$ is that they restrict 
the possible vertex orders of the $G$-part to an order equivalent to~$<$, as shown by the next 
lemma.

\begin{lemma}\label{lem-cycle2} If $(G,<)$ is an ordered graph with a cycle extension $H$, then in 
every grounded 1-string representation of $H$, the order of the vertices of $G$ induced by the 
representation is equivalent to the order~$<$.
\end{lemma}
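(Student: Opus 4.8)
The plan is to work in the equivalent outer (disk) model, which is available since grounded and outer-string representations coincide and the 1-string property is preserved under this equivalence. Let $D$ be the closed disk with boundary circle $\partial D$, represent each vertex by a string with its anchor on $\partial D$, and recall that the induced order of a grounded representation is obtained by reading the cyclic order of the anchors along $\partial D$ from the grounding point. Since reading a cyclic order on a circle as a linear order is ambiguous precisely up to the choice of starting point and of orientation, two linear orders are equivalent (under cyclic shifts and reversals) exactly when they induce the same cyclic order on $\partial D$. Hence it suffices to show that the anchors of $x_1,\dots,x_n$ appear along $\partial D$ in the cyclic order $x_1,x_2,\dots,x_n$, up to reversal.

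The first and main step is to show that the cycle $V_C=\{y_1,\dots,y_{5n}\}$ forces its own anchors into the cyclic order $y_1,y_2,\dots,y_{5n}$. Writing $s_i$ for the string of $y_i$ and $m=5n$, consecutive strings $s_i,s_{i+1}$ cross (indices mod $m$) while non-consecutive ones are disjoint, and by the 1-string assumption each crossing $p_i:=s_i\cap s_{i+1}$ is a single transversal point. I would form the closed curve $\Gamma$ by concatenating, for $i=1,\dots,m$, the sub-arc of $s_i$ between $p_{i-1}$ and $p_i$. The only crossing points lying on $s_i$ are its endpoints $p_{i-1},p_i$, so these sub-arcs have crossing-free interiors; consecutive sub-arcs meet only at the shared corner $p_i$ and non-consecutive ones are disjoint, which makes $\Gamma$ a simple closed curve in the open disk. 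The region of $D$ between $\Gamma$ and $\partial D$ is then an annulus $A$. Each anchor $a(y_i)$ is joined to $\Gamma$ by the \emph{tail} of $s_i$, the sub-arc from $a(y_i)$ to the first of $p_{i-1},p_i$ it meets; these tails have interiors disjoint from $\Gamma$ and from one another, they attach to $\Gamma$ in the cyclic order $1,2,\dots,m$, and they cross $A$ from its inner to its outer boundary. Since disjoint arcs joining the two boundary circles of an annulus preserve cyclic order, the anchors $a(y_1),\dots,a(y_m)$ occur along $\partial D$ in the order $1,2,\dots,m$ up to reversal; I fix the orientation so this holds.

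For the second step I would use the tails to cut $A$ into cells $C_1,\dots,C_m$, where $C_j$ is bounded by the tails of $s_j$ and $s_{j+1}$ and by the boundary arc from $a(y_j)$ to $a(y_{j+1})$. Consider the string $t_i$ of $x_i$, with anchor $b_i$. Since $x_i$ is adjacent in $H$ to $y_{5i}$ (and possibly $y_{5i-1}$) and to no other vertex of $V_C$, the 1-string property lets $t_i$ cross only $s_{5i}$ and $s_{5i-1}$, each at most once; in particular $t_i$ may cross $\Gamma$ only along the arcs of $s_{5i}$ or $s_{5i-1}$, and it may move between cells only by crossing the tail of $s_{5i}$ or of $s_{5i-1}$. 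As $t_i$ must meet $s_{5i}$, it must reach a cell incident to $s_{5i}$, namely $C_{5i-1}$ or $C_{5i}$. Crossing the tails of $s_{5i-1}$ and $s_{5i}$ only connects $C_{5i-2}$ to $C_{5i-1}$ and $C_{5i-1}$ to $C_{5i}$, so the starting cell of $b_i$ lies among $C_{5i-2},C_{5i-1},C_{5i}$; hence $b_i$ lies strictly between $a(y_{5i-2})$ and $a(y_{5i+1})$. These windows are pairwise disjoint and occur in the cyclic order $1,\dots,n$ (the windows for $x_i$ and $x_{i+1}$ are separated by $a(y_{5i+2})$), so $b_1,\dots,b_n$ appear along $\partial D$ in the cyclic order $x_1,\dots,x_n$, as required.

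I expect the main obstacle to be the first step: verifying rigorously that $\Gamma$ is a \emph{simple} closed curve and that the tails realise the claimed cyclic order in the annulus, which is exactly where the 1-string hypothesis and the non-degeneracy assumptions on crossings are indispensable, since without them $\Gamma$ could self-intersect or the tails could interleave. The bookkeeping of the second step — tracking which cells $t_i$ can reach — is then routine, with the factor $5$ in the cycle length being precisely what keeps the confinement windows of distinct $x_i$ disjoint.
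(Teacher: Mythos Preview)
Your argument is correct and follows the same route as the paper: pass to the disk model, build the Jordan curve from the central arcs of the cycle strings (your $\Gamma$, the paper's $C$), and use the initial sub-arcs (your tails, the paper's $\strt(j)$) to confine the anchor of each $x_i$ to a window near~$y_{5i}$. The only difference is granularity: the paper cuts the annulus into $n$ coarse regions $R_i$ bounded by $\strt(5i-3)$ and $\strt(5i+2)$ and checks directly that neither the string of $x_i$ nor $c_{5i}$ can cross $\partial R_k$ for $k\ne i$, which sidesteps your cell-by-cell tracking (and in particular the case where $t_i$ meets $s_{5i}$ only inside the interior of $\Gamma$, which your write-up glosses over); conversely, your first step makes explicit the cyclic order of the $y_j$-anchors, which the paper leaves implicit.
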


\newcommand{\cnt}{\text{center}}
\newcommand{\strt}{\text{start}}

\begin{proof}
The proof follows the same ideas as the proof of the Cycle Lemma of Cardinal et al.~\cite[Lemma 
6]{Cardinal_arx}. 

Suppose $(G,<)$ is an ordered graph with vertices $V_G=\{x_1<x_2<\dotsb<x_n\}$ and edge-set $E_G$, 
and $H$ is its cycle extension, with vertices $V_H=V_G\cup V_C$ as in the definition of cycle 
extension and $V_C=\{y_1,\dotsc,y_{5n}\}$. When working with the indices of the vertices in $V_C$, 
we will assume that arithmetic operations are performed modulo~$5n$, so $5n+1=1$, etc.

Suppose that $H$ has a grounded 1-string representation. We may transform this representation into 
an outer-1-string representation, while preserving the induced vertex order up to equivalence. 
Suppose then that an outer-1-string representation of $H$ is given, inside a disk whose boundary is 
a circle~$B$. Let $c_j$ be the string 
representing~$y_j$, and let $p_{j,j+1}$ be the intersection point of $c_j$ and~$c_{j+1}$. The 
subcurve of $c_j$ between the two intersection points $p_{j-1,j}$ and $p_{j,j+1}$ is the 
\emph{central part} of~$c_j$, denoted~$\cnt(j)$. The part of $c_j$ between the anchor and the first 
point of $\cnt(j)$ is the \emph{initial part} of $c_j$, denoted $\strt(j)$. Let $p_j$ be the common 
endpoint of $\strt(j)$ and $\cnt(j)$. Note that $p_j$ is equal to $p_{j-1,j}$ or to $p_{j,j+1}$.
The sequence of curves $\cnt(1),\cnt(2),\dotsc,\cnt({5n})$ forms a closed Jordan curve, denoted 
by~$C$. Note that $C$ contains all the points $\{p_{k,k+1};\;k=1,\dotsc,5n\}$. Let $R_C$ be the 
interior region of~$C$.

Consider now a vertex $x_i$, represented by a string~$s_i$. Note that $s_i$ can only intersect the 
curve $C$ in a point of $\cnt({5i})$ or possibly~$\cnt({5i-1})$. Let $R_i$ be the planar region 
bounded by the union of the following four curves: $\strt(5i-3)$, $\strt(5i+2)$, the arc of $C$ 
between $p_{5i-3}$ and $p_{5i+2}$ that contains $\cnt(5i-1)\cup\cnt(5i)$, and the arc of $B$ between 
the anchors of $c_{5i-3}$ and $c_{5i+2}$ that contains the anchors of $c_{5i-1}$ and $c_{5i}$. 

Note that $s_i$ is the only string among the strings representing $V_G$ that can intersect the 
boundary of~$R_i$. Note also that the string $c_{5i}$ cannot intersect the boundary of $R_k$ for 
$k\neq i$, and therefore $c_{5i}$ is contained in $R_i\cup R_C$. Since $s_i$ intersects $c_{5i}$, and 
since $s_i$ also cannot cross the boundary of $R_k$ for $k\neq i$, it follows that $s_i$ is also 
contained in $R_C\cup R_i$, and in particular, the anchor of $s_i$ is in $R_i\cap B$. Therefore, the 
anchors of $s_1,\dotsc,s_n$ appear on $B$ in the order which, up to equivalence, corresponds to the 
order $<$ on~$V_G$.
\end{proof}

We will now use Lemmas~\ref{lem-extend} and~\ref{lem-cycle2} to construct graphs that have no 
representation in a given 
intersection class. Our goal is to show that there are no inclusions missing in 
Figure~\ref{fig-classes}. The classes \cls{Int}, \cls{Circle}, \cls{Outerplanar} and \cls{Per} are 
well studied~\cite{Brandstadt}, and simple examples show that there are no inclusions among them 
other than those depicted in Figure~\ref{fig-classes}. 

Catanzaro et al.~\cite[Observation 6.9]{Catanzaro} observed that the graph $K_{2,2,2}$ (the 
octahedron) is a permutation graph not in \cls{Mpt}, and therefore neither \cls{Per} nor 
any superclass of \cls{Per} is contained in \cls{Mpt}. Cardinal et al.~\cite{Cardinal} showed 
that \cls{Grounded-seg} is a proper subclass of \cls{Outer-1-string}. To complete the hierarchy, 
we only need the following separations.

\begin{figure}
\centerline{\includegraphics[width=0.9\textwidth]{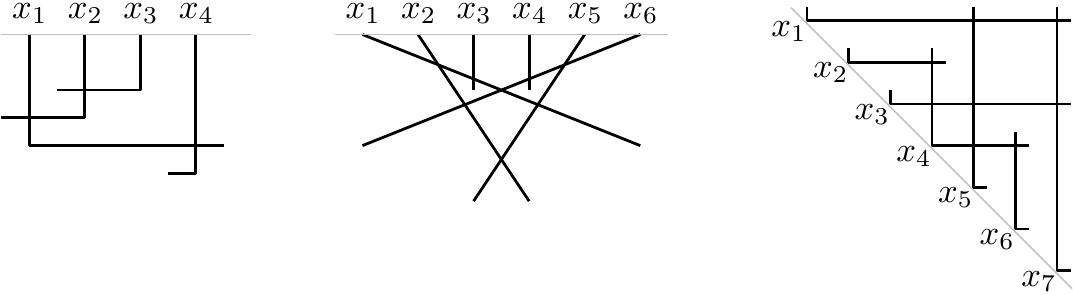}}
\caption{The three intersection representations used to prove 
Theorem~\ref{thm-separate}. In each case, a representation cannot be replaced by a representation from 
a smaller class while preserving the induced vertex order.
Left: a \outerlj\ representation which cannot be replaced by a \outerl\ one. Middle: 
a \cls{Grounded-seg} representation which cannot be replaced by a \outerlj\ one. 
Right: an \cls{Mpt} representation which cannot be replaced by an 
\cls{Outer-1-string} one.}\label{fig-sep}
\end{figure}

\begin{theorem}\label{thm-separate} The following properties hold.
\begin{itemize}
\item[(i)] The class \outerlj\ is not a subclass of \outerl.
\item[(ii)] The class \cls{Grounded-seg} is not a subclass of \outerlj.
\item[(iii)] The class \cls{Mpt} is not a subclass of~\cls{Outer-1-string}.
\end{itemize}
%
\end{theorem}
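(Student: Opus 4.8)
The plan is to construct, for each of the three separations, an explicit ordered graph whose intersection representation in the larger class forces a particular induced vertex order, and then combine it with its cycle extension so that Lemmas~\ref{lem-extend} and~\ref{lem-cycle2} rigidify that order up to equivalence. The strategy for all three parts is the same: I exhibit a small "gadget" graph $G$ together with a linear order $<$ realizable by a representation in the larger class (as in Figure~\ref{fig-sep}), and I argue that no representation in the smaller class can induce an order equivalent to~$<$. Passing to the cycle extension $H$ then upgrades this into a genuine class separation: by Lemma~\ref{lem-extend} the larger-class representation of $G$ extends to one of $H$, so $H$ lies in the larger class; but by Lemma~\ref{lem-cycle2} every grounded 1-string representation of $H$ (hence every representation in any of our subclasses, all of which are grounded 1-string classes) must induce an order on $V_G$ equivalent to~$<$, so if the smaller class cannot realize $<$ up to equivalence, then $H$ is not in the smaller class.

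For part (i), I would use the forbidden-pattern characterisation from Theorem~\ref{thm-patterns}. The gadget $G$ should be a graph with a specific order $<$ that is realizable by an $\{\LL,\JJ\}$-representation inducing $<$, but such that every order equivalent to $<$ contains one of the patterns $\Pa$ or $\Pb$. Since $\outerl$-representations induce exactly the orders avoiding both patterns, and cyclic shifts/reversals of $<$ must be checked, the core combinatorial task is to verify that each of the $2\cdot 5n$ orders equivalent to~$<$ contains a forbidden pattern. For part (ii), since $\outerlj\subseteq\cls{Grounded-seg}$, I need a grounded segment gadget whose induced order is not realizable by any $\{\LL,\JJ\}$-representation; here the obstruction is geometric rather than pattern-based, so I would isolate a structural invariant of $\{\LL,\JJ\}$-representations (for instance, a restriction on which nested/crossing configurations the axis-parallel bends permit) that grounded segments can violate. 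For part (iii), $\cls{Mpt}$ is the hardest to separate because it is incomparable to the grounded classes; I would take an $\cls{Mpt}$ gadget and argue directly that its rigidified order cannot occur in any \emph{outer-1-string} representation, using the 1-string restriction (two curves meet at most once) together with the forced cyclic order to derive a planarity-type contradiction.

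**The main obstacle** will be proving the negative direction in each case: showing that \emph{no} representation in the smaller class can induce an order equivalent to~$<$. For (i) this reduces to the finite pattern-checking made possible by Theorem~\ref{thm-patterns}, which is the cleanest of the three. For (ii) and (iii), where no forbidden-pattern characterisation of the smaller class is available, I expect the difficulty to lie in extracting the right topological invariant from the geometry of the representation and showing it is incompatible with the prescribed cyclic order; this is where a careful case analysis of how the gadget's curves can be routed inside the disk, respecting the fixed anchor order, will be unavoidable. In each part the cycle-extension machinery does the "lifting" automatically, so once the order-realizability obstruction for the bare gadget $(G,<)$ is established, the separation follows by the uniform argument above.
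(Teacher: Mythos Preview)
Your plan matches the paper's proof: exhibit a gadget $(G,<)$ realizable in the larger class, argue that no order equivalent to $<$ is realizable in the smaller class, then pass to a cycle extension via Lemmas~\ref{lem-extend} and~\ref{lem-cycle2}. The one refinement you did not anticipate is that, rather than checking all equivalent orders one by one (your count should be $2|V_G|$, not $2\cdot 5n$), the paper concatenates copies of the gadget---and, for part~(i), its mirror image---side by side, so that every cyclic shift or reversal of the resulting order still contains a consecutive copy of the original obstruction, reducing the verification to the single order~$<$.
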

\begin{proof}
We first prove part (i) of the theorem. Consider the graph $G=(V,E)$ with 
$V=\{x_1,x_2,x_3,x_4\}$ and $E=\{x_1x_2,\allowbreak x_2x_3,\allowbreak x_1x_4\}$. 
Figure~\ref{fig-sep} (left) shows a grounded $\{\LL,\JJ\}$-representation of $G$ which induces the 
order $<$ defined as $x_1< x_2< x_3< x_4$ on $V$. Note that there is no grounded \LL-representation 
of $G$ that would induce the vertex order $<$, because $(G,<)$ contains the pattern $\Pb$ of 
Theorem~\ref{thm-patterns}. 

Let $(G', <')$ be the ordered graph obtained by putting $(G,<)$ and the mirror image of $(G,<)$ side 
by side. Formally, $(G',<')$ has vertex set $V'=\{x_1, x_2,x_3,\allowbreak x_4,y_1,y_2,\allowbreak 
y_3,y_4\}$, edge set $E'=\{ x_1x_2, x_2x_3, x_1x_4, y_1y_2, y_2y_3,y_1y_4\}$ and vertex order $x_1<' 
x_2<' x_3<' x_4<'y_4<'y_3<' y_2<' y_1$. Finally, let $(G'',<'')$ be the ordered graph obtained by 
placing two disjoint copies of $(G',<')$ side by side. Clearly $G''$ has a grounded
$\{\LL,\JJ\}$-representation which induces the vertex order $<''$. However, $G''$ has no grounded
\LL-representation inducing a vertex order equivalent with $<''$, since in any vertex order 
equivalent with $<''$ there are four consecutive vertices forming a copy of~$\Pb$.

By Lemma~\ref{lem-extend}, the ordered graph $(G'',<'')$ has a cycle extension 
$H$ that admits a grounded $\{\LL,\JJ\}$-representation. By Lemma~\ref{lem-cycle2}, any grounded 
1-string representation (and therefore any grounded \LL-representation) of $H$ induces on $V''$ an 
order which is equivalent with~$<''$. It follows that $H$ has no grounded \LL-representation, and 
therefore \outerlj\ is not a subclass of \outerl, as claimed.

For the other two parts of the theorem, the argument is analogous, the main difference is in the 
choice of the initial ordered graph~$(G,<)$. To prove part (ii), consider 
the graph $G$ on six vertices whose \cls{Grounded-seg} representation is in the middle of 
Figure~\ref{fig-sep}, and let $<$ be the vertex order induced by the depicted representation. 

Let us argue that $G$ has no grounded $\{\LL,\JJ\}$-representation inducing the vertex order~$<$. 
For contradiction, suppose that such a representation exists, and let $\ell_i$ denote the \LL-shape 
or \JJ-shape representing~$x_i$ in this representation. Let $h_i$ and $v_i$ be the horizontal and 
vertical segment of~$\ell_i$, respectively. Suppose, without loss of generality, that $v_1$ is 
longer than~$v_6$. Since $\ell_1$ and $\ell_6$ intersect, $h_6$ must intersect $v_1$, and $\ell_6$ 
is a \JJ-shape. Since $\ell_2$ intersects both $\ell_1$ and $\ell_6$, $v_2$ must be longer than 
$v_6$, and $v_2$ intersects~$h_6$. But this means that $\ell_3$ must intersect either 
$\ell_2$ or $\ell_6$ in order to intersect $\ell_1$, a contradiction.

Note that the graph $(G,<)$ is isomorphic to its reversal. Consider the ordered graph $(G',<')$ 
obtained by placing two copies of $(G,<)$ side by side: note that in any vertex order equivalent to 
$<'$, $G'$ contains a copy of $(G,<)$, and therefore there is no grounded 
$\{\LL,\JJ\}$-representation of $G'$ inducing a vertex order equivalent to~$<'$. We apply 
Lemmas~\ref{lem-extend} and \ref{lem-cycle2} to $(G',<')$ and obtain its cycle extension $H$, which 
is in \cls{Grounded-seg} but not in \outerlj.

To prove part (iii), consider the graph $G$ whose \cls{Mpt}-representation is depicted in the right 
part of Figure~\ref{fig-sep}, and let $<$ be the vertex order induced by the representation. We 
claim that there is no grounded 1-string representation of $G$ inducing the order~$<$. For 
contradiction, suppose that such a representation exists, and let $s_i$ be the string representing 
the vertex~$x_i$. Additionally, let $a_i$ denote the anchor of $s_i$, and for a pair of intersecting 
strings $s_i$, $s_j$ let $p_{ij}$ 
denote their intersection. 

Suppose, without loss of generality, that when we follow $s_4$ starting in $a_4$, we encounter 
$p_{24}$ before we encounter $p_{46}$. Let $C$ be the closed Jordan curve obtained as the union of 
the subcurve of $s_1$ between $a_1$ and $p_{17}$, the subcurve of $s_7$ between $p_{17}$ and 
$p_{37}$, the subcurve of $s_3$ between $p_{37}$ and $a_3$, and the segment $a_1a_3$ of the grounding 
line. Note that $s_2$ is inside $C$ (except $a_2$, which lies on $C$), and both $a_4$ and $s_6$ are 
outside~$C$. Therefore, $s_4$ must intersect $C$ at least twice: once between $a_4$ and $p_{24}$, 
and once between $p_{24}$ and $p_{46}$. However, $s_4$ can only intersect $C$ in the point $p_{34}$, 
a contradiction. 

To complete the proof, we first observe that $G$ has no grounded 1-string representation inducing a 
vertex order equivalent with~$<$, since such a representation could be trivially transformed into a 
grounded 1-string representation inducing~$<$. We apply Lemmas~\ref{lem-extend} and 
\ref{lem-cycle2} to $G$, to obtain a graph $H$ which is in $\cls{Mpt}$ but not in 
\cls{Outer-1-string}.
\end{proof}

Note that these results imply that \cls{Outer-string} is a proper superclass of both \cls{Mpt} 
and \cls{Outer-1-string}.

We remark that \cls{Mpt} is clearly a subclass of \cls{1-string} and of \cls{Outer-string}, but it 
is not a subclass of \cls{Outer-1-string}, as we just saw.

\section{Concluding remarks}

We have seen that the vertex orders induced by grounded \LL-representations can be characterised by 
a pair of forbidden patterns. Previously, a characterisation by a single forbidden pattern has been 
found for vertex orders induced by \cls{Mpt} representations~\cite{Ahmed,Catanzaro,Soto}. It is an 
open 
problem whether such a characterisation can be obtained for other similar grounded intersection 
classes, such as the class \outerlj.

Another problem concerns the recognition complexity of the graph classes we considered. Recognition 
of max point-tolerance graphs is mentioned as a prominent open problem by Ahmed et al.~\cite{Ahmed}, 
by Catanzaro et al.~\cite{Catanzaro}, as well as by Soto and Thraves Caro~\cite{Soto}. For the 
classes \outerl\ and \outerlj, recognition is open as well. On the other hand, the recognition 
problem for \cls{Grounded-seg} is known to be $\exists \mathbb{R}$-complete, as shown by Cardinal et 
al.~\cite{Cardinal}. In particular, \cls{Grounded-seg} cannot be characterised by finitely many 
forbidden vertex order patterns, unless $\exists\mathbb{R}$ is equal to NP.

The characterisation of \outerl\ by forbidden vertex order patterns might conceivably be helpful in 
designing a polynomial recognition algorithm, but note that even a graph class characterised by a 
forbidden vertex order pattern may have NP-hard recognition~\cite{Duffus}, although it is known that 
recognition is polynomial for all classes described by a set of forbidden patterns of order at most 
three~\cite{HMR}.

%
%
\bibliographystyle{plain}
\bibliography{outerl}
%
%
%
%
%
\end{document}